\theoremstyle{plain}\newtheorem{Theorem}{Theorem}[section]
\theoremstyle{plain}
\theoremstyle{plain}
\theoremstyle{plain}
\theoremstyle{plain}
\theoremstyle{plain}
\theoremstyle{definition}
\theoremstyle{definition}
\theoremstyle{definition}
\theoremstyle{definition}\newtheorem{Remark}[Theorem]{Remark}
\theoremstyle{definition}
\def\add{\mathrm{add}}
\def\coMack{\mathrm{coMack}}    
           \def\tenkP{\otimes_{kP}}
\def\ddim{\mathrm{ddim}}     
\def\End{\mathrm{End}}           \def\tenkQ{\otimes_{kQ}}
\def\Ext{\mathrm{Ext}}
\def\Hom{\mathrm{Hom}}
             \def\tenA{\otimes_A}
             \def\tenB{\otimes_B}
\def\mod{\mathrm{mod}}
\def\op{\mathrm{op}}
\def\Res{\mathrm{Res}}
\title{The dominant dimension of cohomological Mackey functors} 
\author{Markus Linckelmann} 
\date{}
\begin{document}

\maketitle

\begin{abstract}
We show that a separable equivalence between symmetric algebras
preserves the dominant dimensions of certain endomorphism algebras of
modules. We apply this to show that the dominant dimension of the
category $\coMack(B)$ of cohomological Mackey functors of a $p$-block 
$B$ of a finite group with a nontrivial defect group is $2$.
\end{abstract}

\section{Introduction}

Let $k$ be a field. 
Following Tachikawa \cite{Tac}, the {\it dominant dimension of a 
finite-dimensional $k$-algebra $A$}, which we will denote by 
$\ddim(A)$, is the largest nonnegative integer $d$ such that there 
exists an injective resolution
$$\xymatrix{0\ar[r] & A\ar[r] & I^0\ar[r]&I^1 \ar[r]&\cdots}$$
of $A$ as a right $A$-module with the property that $I^n$ is projective 
for $0\leq n\leq d-1$, provided there is such an integer. If for any
injective resolution $I$ of $A$ the term $I^0$ is not projective, then 
$\ddim(A)=$ $0$, and if there exists an injective resolution $I$ of $A$
such that $I^n$ is projective for all $n\geq$ $0$, then we adopt the 
convention $\ddim(A)=$ $\infty$. By a result of M\"uller 
\cite[Theorem 4]{Mue} the dominant dimension is equal to the obvious 
left module analogue. In order to calculate $\ddim(A)$ it suffices to
consider a minimal injective resolution of $A$ as a right $A$-module.

If $k$ has prime characteristic $p$ and if $A$ is a source algebra of 
a block $B$ of a finite group algebra $kG$ with a defect group $P$, 
then, by the source algebra version \cite[Theorem 1.1]{LincoMack} of 
a result of Yoshida in \cite{YoshidaII}, the category $\coMack(B)$ of 
cohomological Mackey functors of $G$ associated with $B$ is equivalent 
to the right module category of the endomorphism algebra  $E=$
$\End_A(\oplus_{Q\leq P}\ A\tenkQ k)$. The {\it dominant dimension of 
$\coMack(B)$} is defined as the dominant dimension of $E$.

\begin{Theorem} \label{ddimcoMack}
Suppose that $k$ has prime characteristic $p$. Let $G$ be a finite 
group and $B$ a block of $kG$ with a nontrivial defect group $P$.  
The dominant dimension of $\coMack(B)$ is equal to $2$.
\end{Theorem}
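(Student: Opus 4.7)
The plan is to invoke the main separable-equivalence preservation result of the paper in order to reduce to the case where $A$ is replaced by the group algebra $kP$, and then to apply Auslander's characterization of the dominant dimension of endomorphism algebras of generator-cogenerators.

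Concretely, I would first recall that the source algebra $A$ is separably equivalent to $kP$ as symmetric $k$-algebras, via the bimodules ${}_AA_{kP}$ and ${}_{kP}A_A$ coming from the inclusion $kP\hookrightarrow A$: here $A$ is projective as both a left and right $kP$-module, and $A$ is a direct summand of $A\tenkP A$ as an $A$-$A$-bimodule by the defining property of a source algebra. Under this equivalence the $A$-module $M=\bigoplus_{Q\leq P}A\tenkQ k$ corresponds to the $kP$-module $N=\bigoplus_{Q\leq P}kP\tenkQ k$, since $A\tenkP(kP\tenkQ k)\cong A\tenkQ k$ for each $Q\leq P$. Applying the preservation theorem to the pair $(A,kP)$ with these matched modules then reduces the problem to showing that $\ddim(\End_{kP}(N))=2$.

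For this computation, the summand of $N$ at $Q=1$ is the regular module $kP$, so $kP\in\add(N)$. Since $kP$ is symmetric, every indecomposable injective right $kP$-module is a summand of $kP$ and hence of $N$, so $N$ is a generator-cogenerator of $\mod kP$. Auslander's theorem then gives $\ddim(\End_{kP}(N))\geq 2$, with equality if and only if $\Ext^1_{kP}(N,N)\neq 0$. The summand of $N$ at $Q=P$ is the trivial module $k$, and $\Ext^1_{kP}(k,k)\cong H^1(P,k)$ is nonzero whenever $P$ is a nontrivial $p$-group (any surjection $P\twoheadrightarrow\Z/p\Z$ provides a nontrivial class). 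Hence $\ddim(\End_{kP}(N))=2$, which completes the argument.

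The step I expect to be the main obstacle is verifying that the pair of modules $M$ and $N$ above really fits the hypotheses of the preservation theorem, i.e., the ``certain endomorphism algebras'' clause of the abstract: one needs to check that the separable-equivalence bimodules transport $M$ to $N$ (and back) in a way compatible with the dominant-dimension equality, and in particular that the generator-cogenerator property and the Ext vanishing/non-vanishing transfer across the equivalence. Once that set-up is in place, the Auslander criterion and the cohomology computation over $kP$ are routine.
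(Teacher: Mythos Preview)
Your proposal is correct and follows essentially the same route as the paper: reduce via the separable-equivalence preservation theorem from $\End_A(M)$ to $\End_{kP}(N)$, then use the generator--cogenerator criterion (which the paper attributes to Morita--Tachikawa and M\"uller rather than Auslander) together with $\Ext^1_{kP}(k,k)\neq 0$. The obstacle you flag is handled exactly as in the paper: the inclusion $A\tenkP N\in\add(M)$ is immediate from $A\tenkP(kP\tenkQ k)\cong A\tenkQ k$, while the reverse inclusion ${}_{kP}A\tenA M\in\add(N)$ uses that the source algebra $A$ has a $P\times P$-stable $k$-basis, so tensoring with ${}_{kP}A$ preserves $p$-permutation modules and hence sends each $A\tenkQ k$ into $\add(N)$.
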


A finite-dimensional $k$-algebra $A$ is called {\it symmetric} if $A$ 
is isomorphic to its $k$-dual $A^*$ as an $A$-$A$-bimodule. 
Given two symmetric $k$-algebras $A$ and $B$, we say
that an $A$-$B$-bimodule $M$ {\it induces a separable equivalence
between $A$ and $B$} if $M$ is finitely generated projective as a left 
$A$-module and as a right $B$-module such that $A$ is isomorphic to a 
direct summand of $M\tenB M^*$ as an $A$-$A$-bimodule and $B$ is 
isomorphic to a direct summand of $M^*\tenA M$ as a $B$-$B$-bimodule. 
In that case we say that $A$ and $B$ are {\it separably equivalent}. 
See \cite{LinHecke} for details and more general versions of this 
notion.
We will show that Theorem \ref{ddimcoMack} is an immediate consequence 
of results from \cite{LincoMack}, \cite{LR1}, \cite{Mue}, and the 
following.

\begin{Theorem} \label{endoddim}
Let $A$, $B$ be symmetric $k$-algebras. Let $M$ be an 
$A$-$B$-bimodule inducing a separable equivalence between $A$ and $B$.
Let $U$ be a finitely generated $A$-module such that $A$ is isomorphic
to a direct summand of $U$, and let $V$ a finitely generated 
$B$-module such that $B$ is isomorphic to a direct summand of $V$. 
Suppose that $M^*\tenA U\in$ $\add(V)$ and that $M\tenB  V\in$ 
$\add(U)$. Then the dominant dimensions of $\End_A(U)$ and of 
$\End_B(V)$ are equal.
\end{Theorem}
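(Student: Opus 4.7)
The plan is to reduce equality of dominant dimensions to a transfer of $\Ext$-vanishing between $U$ and $V$, via M\"uller's characterization \cite{Mue} of dominant dimension of endomorphism algebras of generator-cogenerators. Since $A$ is symmetric it is self-injective, so $U$ (which contains $A$ as a summand) is automatically both a generator and a cogenerator of the category of $A$-modules; likewise for $V$ over $B$. The standard form of M\"uller's theorem then gives
\[
\ddim(\End_A(U)) \;\geq\; n+2 \quad \Longleftrightarrow \quad \Ext^i_A(U,U) = 0 \text{ for } 1 \leq i \leq n,
\]
and analogously for $\End_B(V)$. Hence it suffices to show that $\Ext^i_A(U,U) = 0$ if and only if $\Ext^i_B(V,V) = 0$, for every $i \geq 1$.

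The main tool for the transfer is that, for symmetric $k$-algebras $A$, $B$ and an $A$-$B$-bimodule $M$ that is finitely generated projective on each side, the pair of functors $M \tenB -$ and $M^* \tenA -$ between $A$-modules and $B$-modules is biadjoint (a Frobenius pair); this uses the identification $M^* \cong \Hom_A(M,A)$ arising from symmetry of $A$, and the dual statement on the $B$-side. Taking projective resolutions yields natural isomorphisms
\[
\Ext^i_A(M \tenB N,\, W) \;\cong\; \Ext^i_B(N,\, M^* \tenA W),
\]
and symmetrically $\Ext^i_B(M^* \tenA X,\, Y) \cong \Ext^i_A(X,\, M \tenB Y)$, for all $i \geq 0$ and all relevant modules.

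Now suppose $\Ext^i_B(V,V) = 0$ for some $i \geq 1$. Since $A$ is a direct summand of $M \tenB M^*$ as $A$-$A$-bimodule, applying $- \tenA U$ shows that $U$ is a direct summand of $M \tenB (M^* \tenA U)$ as an $A$-module; consequently $\Ext^i_A(U,U)$ is a direct summand of
\[
\Ext^i_A\bigl(M \tenB (M^* \tenA U),\, U\bigr) \;\cong\; \Ext^i_B\bigl(M^* \tenA U,\, M^* \tenA U\bigr).
\]
The hypothesis $M^* \tenA U \in \add(V)$ makes the right-hand side a direct summand of a finite direct sum of copies of $\Ext^i_B(V,V) = 0$, whence $\Ext^i_A(U,U) = 0$. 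The reverse implication follows by symmetrically swapping $(A,U,M)$ with $(B,V,M^*)$, using $B \mid M^* \tenA M$ and $M \tenB V \in \add(U)$. Combined with M\"uller's theorem this yields $\ddim(\End_A(U)) = \ddim(\End_B(V))$.

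The step deserving the most care is setting up the biadjointness of $(M \tenB -,\, M^* \tenA -)$ cleanly: one needs $M^* \cong \Hom_A(M,A)$ from symmetry of $A$, finite projectivity of $M$ on the appropriate side for each adjunction, and verification that the resulting Hom-isomorphism passes to derived functors (which is immediate from exactness of $M \tenB -$ and preservation of projectives). Once this is in place, the rest of the argument is a direct combination of the four summand conditions $A \mid M \tenB M^*$, $B \mid M^* \tenA M$, $M^* \tenA U \in \add(V)$, $M \tenB V \in \add(U)$ with the $\Ext$-transport isomorphisms.
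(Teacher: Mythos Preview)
Your proof is correct and takes a genuinely different route from the paper's. The paper works directly at the level of injective resolutions: starting from an injective resolution of $E=\End_A(U)$ whose first $d$ terms are projective, it uses \cite[Proposition~3.2]{LR1} to realise those terms as $\Hom_A(U,U_n)$ for projective $A$-modules $U_n$, then applies $\Hom_A(M\tenB V,-)$ and the adjunction to obtain an exact sequence of right $F$-modules beginning an injective resolution of $\Hom_B(V,M^*\tenA U)$ with $d$ projective terms; since $V\in\add(M^*\tenA U)$, this yields $\ddim(F)\geq d$. You instead invoke M\"uller's $\Ext$-characterisation of the dominant dimension of $\End_A(U)$ for a generator--cogenerator $U$ (legitimate here because $A$, $B$ are self-injective, so $A\mid U$ and $B\mid V$ make $U$, $V$ generator--cogenerators), reducing the problem to the equivalence $\Ext^i_A(U,U)=0\Leftrightarrow\Ext^i_B(V,V)=0$, which you obtain cleanly from $U\mid M\tenB M^*\tenA U$, the adjunction isomorphism $\Ext^i_A(M\tenB N,W)\cong\Ext^i_B(N,M^*\tenA W)$, and $M^*\tenA U\in\add(V)$. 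Your argument is shorter and more conceptual, at the cost of importing the full M\"uller criterion as a black box; the paper's argument is more self-contained about the resolution-level mechanics (and in fact only uses M\"uller's Lemma~3 later, in the proof of Theorem~\ref{ddimcoMack}), but relies on the identification of projective--injectives from \cite{LR1}. Both approaches rest on the same biadjunction and the same four summand hypotheses; they simply package the passage from resolutions to $\Ext$ in opposite orders.
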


\section{Proof of Theorem \ref{endoddim}}

Let $A$ be a finite-dimensional $k$-algebra, and let $U$, $V$ be 
finitely generated $A$-modules. 
We use without further reference the following standard facts. 
If $V$ belongs to $\add(U)$, then $\Hom_A(U,V)$ is a projective 
right $\End_A(U)$-module, and any finitely generated projective right
$\End_A(U)$-module is of this form, up to isomorphism.  Given two 
idempotents $i$, $j$ in $A$, every homomorphism of right $A$-modules 
$iA\to$ $jA$ is induced by left multiplication with an element in $jAi$. 
Multiplication by $i$ is exact; in particular, if $Z$ is a complex of 
$A$-modules which is exact or which has homology concentrated in a
single degree, the same is true for finite direct sums of complexes
of the form $iZ$. Translated to endomorphism algebras this implies that 
for any two $A$-modules $V$, $W$ in $\add(U)$, any homomorphism of right 
$\End_A(U)$-modules $\Hom_A(U,V)\to$ $\Hom_A(U,W)$ is induced by 
composition with an $A$-homomorphism $V\to$ $W$. Thus any complex of 
finitely generated projective right $\End_A(U)$-modules is isomorphic to 
a complex obtained from applying the functor $\Hom_A(U,-)$ to a complex 
of $A$-modules $Z$ whose terms belong to $\add(U)$. Moreover, if 
$\Hom_A(U,Z)$ is exact, then so is any complex of the form 
$\Hom_A(U',Z)$, where $U'\in$ $\add(U)$.
We use further the well-known fact that if $A$, $B$ are symmetric
algebras and if $M$ is an $A$-$B$-bimodule which is finitely generated
projective as a left $A$-module and as a right $B$-module, then the
functors $M\tenB-$ and $M^*\tenA-$ between $\mod(A)$ and $\mod(B)$
are biadjoint.

\begin{proof}[{Proof of Theorem \ref{endoddim}}] 
The following argument from the proof of \cite[Theorem 3.1]{LincoMack} 
shows that $\add(M\tenB V)=$ $\add(U)$ and $\add(M^*\tenA U)=$ 
$\add(V)$.
By the assumptions, we have $\add(M\tenB V)\subseteq$ $\add(U)$.
Thus $\add(M^*\tenA M \tenB V)\subseteq$ $\add(M^*\tenA U)\subseteq$
$\add(V)$. Since $B$ is isomorphic to a direct summand of the
$B$-$B$-bimodule $M^*\tenA M$, it follows that $V$ is isomorphic
to a direct summand of $M^*\tenA M\tenB V$. Thus the previous
inclusions of additive categories are equalities. The same
argument with reversed roles shows the second equality.

Set $E=$ $\End_A(U)$ and $F=$ $\End_B(V)$. 
By the assumptions, $\add(U)$ contains all finitely generated
projective $A$-modules; similarly for $\add(V)$. In particular,
if $U'$ is a projective $A$-module, then $\Hom_A(U,U')$ is
a projective right $E$-module, and by \cite[Proposition 3.2]{LR1},
$\Hom_A(U,U')$ is also an injective right $E$-module.  Let
$$\xymatrix{0\ar[r] & E\ar[r] & I^0\ar[r]&I^1 \ar[r]&\cdots}$$
be an injective resolution of $E$ as a right $E$-module.
Suppose that there is a positive integer $d$ such that $I^n$ is 
projective for $0\leq$ $n\leq$ $d-1$. We will show that
there is an injective resolution
$$\xymatrix{0\ar[r] & F\ar[r] & J^0\ar[r]&J^1 \ar[r]&\cdots}$$
of $F$ as a right $F$-module such that $J^n$ is projective
for $0\leq$ $n\leq$ $d-1$. Since the right $E$-modules
$I^n$ are projective and injective for $0\leq$ $n\leq$ $d-1$, it 
follows from \cite[Proposition 3.2]{LR1} that there are finitely 
generated projective $A$-modules $U_n$ for $0\leq$ $n\leq$ $d-1$ such 
that the sequence
$$\xymatrix{0\ar[r] & E\ar[r] & I^0\ar[r]&I^1 \ar[r]&\cdots\ar[r]
& I^{d-1}}$$
is isomorphic to a sequence of the form
$$\xymatrix{0\ar[r] & \Hom_A(U,U)\ar[r] & \Hom_A(U,U_0)\ar[r]
&\Hom_A(U,U_1) \ar[r]&\cdots\ar[r] & \Hom_A(U,U_{d-1})}$$
which is obtained from applying the functor $\Hom_A(U,-)$ to
a sequence of $A$-modules 
$$\xymatrix{0\ar[r] & U\ar[r] & U_0\ar[r]&U_1 \ar[r]&\cdots\ar[r]
& U_{d-1}}$$
It follows from the remarks at the beginning of this section that
for any $A$-module $U'$ in $\add(U)$, applying the functor
$\Hom_A(U',-)$ to the previous sequence of $A$-modules yields
an exact sequence of right $\End_A(U')$-modules of the form
$$\xymatrix{0\ar[r] & \Hom_A(U',U)\ar[r] & \Hom_A(U',U_0)\ar[r]
&\Hom_A(U',U_1) \ar[r]&\cdots\ar[r] & \Hom_A(U',U_{d-1})}$$
By the assumptions, the $A$-module $U'=$ $M\tenB V$ belongs
to $\add(U)$. Thus we obtain an exact sequence of the form
$$\xymatrix{0\ar[r] & \Hom_A(M\tenB V,U)\ar[r] 
& \Hom_A(M\tenB V,U_0)\ar[r] & \cdots\ar[r] 
& \Hom_A(M\tenB V,U_{d-1})}$$
Since $M\tenB-$ is left adjoint to $M^*\tenA-$, it follows that
the previous exact sequence is isomorphic to an exact sequence
of the form
$$\xymatrix{0\ar[r] & \Hom_B(V,M^*\tenA U)\ar[r] 
& \Hom_B(V,M^*\tenA U_0)\ar[r] 
& \cdots \ar[r] & \Hom_B(V,M^*\tenA U_{d-1})}$$
Since the $A$-modules $U_n$ are projective for $0\leq$ $n\leq$ $d-1$,
it follows that the $B$-modules $M^*\tenA U_n$ are projective as
well, hence in $\add(V)$. By \cite[Proposition 3.2]{LR1}, the 
projective right $F$-modules $\Hom_B(V,M^*\tenA U_n)$ are therefore 
also injective. Thus the preceding sequence is the beginning of an 
injective resolution of the right $F$-module $\Hom_B(V,M^*\tenA U)$ 
which has the property that its first $d$ terms are projective. 
Since $V$ belongs to $\add(M^*\tenA U)$, it follows
that $F=$ $\Hom_B(V,V)$ is isomorphic, as a right $F$-module, to a 
direct summand of a direct sum of finitely many copies of the right
$F$-module $\Hom_B(V,M^*\tenA U)$. This implies that $F$ has an 
injective resolution as a right $F$-module whose first $d$ terms
are projective. This shows that $\ddim(F)\geq$ $\ddim(E)$
(including the case where both are $\infty$). Exchanging the roles
of $A$, $E$ and $B$, $F$, respectively, yields the result.
\end{proof}

\section{Proof of Theorem \ref{ddimcoMack}}

We suppose in this section that $k$ is a field of prime characteristic
$p$. Let $G$ be a finite group and $B$ a block of $kG$ with a
nontrivial defect group $P$. Let $i\in$ $B^P$ be a source idempotent
of $B$ and set $A=$ $iBi$; that is, $A$ is a source algebra of $B$.
Note that $A$, $B$, $kP$ are symmetric algebras. It is well-known that 
$A$ and $kP$ are separably equivalent via the bimodule $A_{kP}$ and its
dual, which is isomorphic to ${_{kP}A}$; see e. g. 
\cite[Proposition 4.2]{Liperm} for a proof (the hypothesis on $k$
being algebraically closed in that paper is not needed for this
result). 

\begin{proof}[Proof of Theorem \ref{ddimcoMack}]
With the notation above, set $U=$ $\oplus_Q\ A\tenkQ k$, where $Q$ runs 
over the subgroups of $P$, and set $E=$ $\End_A(U)$.
By \cite[Theorem 1.1]{LincoMack} we have $\coMack(B)\cong$
$\mod(E^\op)$. Set $V=$ $\oplus_Q\ kP\tenkQ k$, where $Q$ runs as
before over the subgroups of $P$, and set $F=$ $\End_{kP}(V)$.
As in the proof of \cite[Theorem 1.6]{LincoMack}, the functor 
$A\tenkP-$ sends $V$ to $\add(U)$ and the functor 
${_{kP}A}\tenA-$ sends $U$ to $\add(V)$, because $A$ has a 
$P\times P$-stable $k$-basis, hence preserves the classes of 
$p$-permutation modules. By definition, the dominant dimension of 
$\coMack(B)$ is equal to $\ddim(E)$.
It follows from Theorem \ref{endoddim} that $\ddim(E)=$ $\ddim(F)$.
We have $\ddim(F)\geq$ $2$ by the general Morita-Tachikawa
correspondence. Since the argument is very short, we sketch it (the
dual argument for left modules is in the proof of \cite[Theorem 2]{Mue},
for instance). Let $0\to$ $V\to$ $I^0\to$ $I^1\to\cdots$ be an 
injective resolution of $V$ as a $kP$-module, with $I^n$ finitely 
generated for all $n\geq$ $0$. The modules $I^n$ are also projective 
since $kP$ is symmetric. Applying $\Hom_{kP}(V,-)$ yields an exact 
sequence 
$$\xymatrix{0\ar[r] & \Hom_{kP}(V,V)=F\ar[r] 
& \Hom_{kP}(V,I^0)\ar[r] & \Hom_{kP}(V, I^1)}$$
of right $F$-modules.
The last two terms are projective as right $F$-modules because 
the finitely generated injective $kP$-modules $I^0$ and $I^1$ are in 
$\add(V)$, and the last two terms are also injective right $F$-modules 
by \cite[Proposition 3.2]{LR1} or by \cite[(17.2)]{Mor}.

By M\"uller's Lemma 3 in \cite{Mue}, in order to show that $\ddim(F)=$ 
$2$, it suffices to show that $\Ext^1_{kP}(V,V)$ is nonzero. The 
summand of $V$ indexed by $P$ is the trivial $kP$-module. Since $P$ is 
nontrivial, it follows that $\Ext^1_{kP}(k,k)\neq$ $\{0\}$, and hence 
$\Ext^1_{kP}(V,V)\neq$ $\{0\}$. Theorem \ref{ddimcoMack} follows.
\end{proof}

\begin{Remark} 
One can prove Theorem \ref{ddimcoMack} also without using Theorem
\ref{endoddim}, by showing directly that $\Ext_A^1(U,U)$ is nonzero, 
and then applying \cite[Lemma 3]{Mue} as in the proof above. Indeed, 
the separable equivalence between $A$ and $kP$ implies that $kP$ is 
isomorphic to a direct summand of $A$ as a $kP$-$kP$-bimodule. Thus 
$A\tenkP k$ has a trivial summand as a left $kP$-module. Note that 
$U\cong$ $A\tenkP V$. A standard adjunction implies that we have an 
isomorphism $\Ext_A^*(U,U)\cong$ $\Ext^*_{kP}(V, \Res^A_{kP}(U))$. By 
the above, both $V$ and $\Res^A_{kP}(U)$ have a trivial summand, and 
hence $\Ext^*_{kP}(k,k)$ is a summand of $\Ext^*_A(U,U)$ as a graded 
$k$-vector space. In particular, $\Ext^1_A(U,U)$ is nonzero since 
$\Ext^1_{kP}(k,k)$ is nonzero. 
\end{Remark}


\end{document}